\DeclareSymbolFont{cyrletters}{OT2}{wncyr}{m}{n}
\DeclareMathSymbol{\Sha}{\mathalpha}{cyrletters}{"58}
\theoremstyle{plain}
\newtheorem{theorem}{Theorem}[section]
\newtheorem*{theorem*}{Theorem}
\newtheorem{lemma}[theorem]{Lemma}
\newtheorem*{conjecture*}{Conjecture}
\newtheorem*{question*}{Question}
\theoremstyle{definition}
\theoremstyle{remark}
\newtheorem*{remark*}{Remark}
\newtheorem*{remarks*}{\bf Remarks}
\newtheorem*{conj*}{Conjecture}
\theoremstyle{definition}
\theoremstyle{plain}
\newtheorem{cor}[theorem]{Corollary}
\newtheorem{prop}[theorem]{Proposition}
\numberwithin{equation}{section}
\newcommand{\R}{\mathbb R}
\newcommand{\N}{\mathbb N}
\newcommand{\C}{\mathbb C}
\newcommand*{\rom}[1]{\expandafter\@slowromancap\romannumeral #1@}
\def\({\left(}
\def\){\right)}
\DeclareMathOperator\Log{Log}
\numberwithin{equation}{section}
\numberwithin{theorem}{section}
\def\lp{\left(}
\def\rp{\right)}
\title[A note on hook length equidistribution]{A note on hook length equidistribution on arithmetic progressions}
\author{Joshua Males}
\address{School of Mathematics, University of Bristol, Bristol, BS8 1TW, UK, and the Heilbronn Institute for Mathematical Research, Bristol, UK.}
\email{joshua.males@bristol.ac.uk}
\begin{document}
	
	\begin{abstract}
In a recent paper, Bringmann, Craig, Ono, and the author showed that the number of $t$-hooks ($t\geq2$) among all partitions of $n$ is not always asymptotically equidistributed on congruence classes $a \pmod{b}$. In this short note, we clarify the situation of $t=1$, i.e.\@ all hook lengths, and show that this case does give asymptotic equidistribution, closing the story of the distribution properties of $t$-hooks on congruence classes.
	\end{abstract}
	
	\subjclass[2020]{11P82}
	
	\keywords{Asymptotic equidistribution, hook lengths}
	
\maketitle
	
\section{Introduction and statement of results}

A partition of a positive integer $n$ is a non-increasing list $\lambda = (\lambda_1, \dots, \lambda_s)$ such that $\sum_j \lambda_j =n$. The theory of partitions is a well-studied area with an abundance of beautiful results. One of the most famous is the asymptotic formula given by Hardy and Ramanujan around a century ago, which says that as $n \to \infty$ we have that
\begin{align*}
	p(n) \sim \frac{1}{4n\sqrt{3}} e^{\pi \sqrt{\frac{2n}{3}}}.
\end{align*}
In order to prove their result, Hardy and Ramanujan introduced the so-called Circle Method, a method which has seen a vast amount of applications in the field over the last 100 years. In a series of papers, Wright \cite{Wright1,Wright2} gave a more flexible variant of the Circle Method (see Section \ref{Sec: WCM}) which is widely applicable in obtaining the main term asymptotic for many exponentially growing sequences, but trades this flexibility for the loss of information in that it cannot be extended to obtain exact formulae for the coefficients.

In more recent times, Wright's Circle Method has been used in many papers in the literature the most prominent example that is relevant to the present paper is in \cite{BCMO} which was concerned with so-called $t$-hooks of partitions. Recall that a partition $\lambda =  (\lambda_1, \dots, \lambda_s)$ can also be viewed by a Ferrers--Young diagram 
$$
\begin{matrix}
	\bullet & \bullet & \bullet & \dots & \bullet & \leftarrow & \lambda_1 \text{ many nodes}\\
	\bullet & \bullet & \dots & \bullet & & \leftarrow & \lambda_2 \text{ many nodes}\\
	\vdots & \vdots & \vdots & &  &  \\
	\bullet & \dots & \bullet & & & \leftarrow & \lambda_s \text{ many nodes},
\end{matrix}
$$
and each node in the diagram has a so-called hook length. The node in row $k$ and column $j$ has hook length
$h(k,j):=(\lambda_k-k)+(\lambda'_j-j)+1,$ where $\lambda'_j$ is the number of nodes in column $j$.

In \cite{BCMO}, Bringmann, Craig, Ono, and the author studied the $t$-hooks of partitions (that is, hook lengths divisible by $t\geq2$) and their distribution properties over arithmetic progressions. However, the case $t=1$ (i.e.\@ all hook lengths) was not studied owing to the somewhat different formulation of its generating function. In particular, in a beautiful paper, Han \cite{Han} gave the following generating function for the number of hook lengths of length $\ell$ among all partitions of $n$.

\begin{align*}
	H(\zeta;q) \coloneqq \sum_{m,n \geq 0} h_\ell(m,n) \zeta^m q^n = \sum_{ \lambda \in \mathcal{P} } q^{|\lambda|} \zeta^{\# \{ h \in \mathcal{H}(\lambda), h=\ell \}} = \prod_{n \geq 1} \frac{ \left(1+(\zeta-1)q^{\ell n}\right)^\ell}{1-q^n}.
\end{align*}

We remark that the numerator is not amenable to classical methods of asymptotic analysis due to the presence of the factor $(\zeta-1)$ in the numerator. However, in a recent paper \cite{BCM} Bringmann, Craig, and the author showed that we may in fact determine the asymptotic behaviour of products of the shape
\begin{align*}
	\prod_{n \geq 1} f(q^n)
\end{align*}
where $f$ are polynomials with complex-valued coefficients carrying certain analytic properties. We follow those ideas in this note.

Let $h(a,b;n)$ count the number of hook lengths of length $\ell \equiv a \pmod{b}$ among all partitions of $n$. In this note we follow the techniques of \cite{BCM} alongside Wright's Circle Method to determine our main theorem, showing that $h(a,b;n)$ are asymptotically equidistributed as $n \to \infty$, thereby finishing the story when taken alongside the results in \cite{BCMO}.

\begin{theorem}\label{Thm: main}
	As $n \to \infty$ we have
	\begin{align*}
		h(a,b;n) = \frac{p(n)}{b} \left(1 + O\left(n^{-\frac{1}{2}}\right)\right)= \frac{1}{4b n \sqrt{3}} e^{\pi \sqrt{\frac{2n}{3}}} \left(1 + O\left(n^{-\frac{1}{2}}\right)\right).
	\end{align*}
	
\end{theorem}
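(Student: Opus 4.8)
The plan is to realise $h(a,b;n)$ as a coefficient extracted from Han's generating function by a root-of-unity filter, and then to analyse that coefficient by Wright's Circle Method. Let $H(\zeta;q)$ denote the generating function of \cite{Han} that records the relevant hook statistic through the variable $\zeta$, normalised so that $H(1;q)=\prod_{n\geq1}(1-q^n)^{-1}=\sum_{n\geq 0}p(n)q^n$. Writing $\zeta_b:=e^{2\pi i/b}$ and using orthogonality of the characters modulo $b$, one obtains
\begin{align*}
h(a,b;n)=\frac{1}{b}\sum_{j=0}^{b-1}\zeta_b^{-aj}\,[q^n]\,H\!\left(\zeta_b^{\,j};q\right).
\end{align*}
The term $j=0$ equals exactly $\frac1b[q^n]\prod_{n\geq1}(1-q^n)^{-1}=\frac{p(n)}{b}$, which is the conjectured main term. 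Thus the theorem reduces to showing that the nontrivial roots of unity are negligible, i.e.\ that $[q^n]H(\zeta_b^{\,j};q)=O\!\left(p(n)\,n^{-1/2}\right)$ for each $1\le j\le b-1$; the explicit second form then follows by inserting the Hardy--Ramanujan asymptotic for $p(n)$.

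To estimate these coefficients I would fix $j\ne0$ and determine the radial behaviour of $H(\zeta_b^{\,j};q)$ as $q=e^{-\delta}$ with $\delta\to0^{+}$. Factoring $H(\zeta;q)=\prod_{n\geq1}(1-q^n)^{-1}\cdot N(\zeta;q)$, the partition factor contributes the familiar $\exp(\pi^2/(6\delta))$, while the twisted factor $N(\zeta_b^{\,j};q)$ is precisely of the product type $\prod_{n\geq1}f(q^{n})$ with $f$ a polynomial having complex coefficients, exactly the class whose asymptotics are developed in \cite{BCM}. A Mellin-transform (or Euler--Maclaurin) computation identifies its leading contribution through a dilogarithm, yielding
\begin{align*}
\log H\!\left(\zeta_b^{\,j};e^{-\delta}\right)=\frac{1}{\delta}\left(\frac{\pi^2}{6}-\mathrm{Li}_2\!\left(1-\zeta_b^{\,j}\right)\right)+O\!\left(\log\tfrac{1}{\delta}\right).
\end{align*}
Solving the standard saddle-point equation in $\delta$ then delivers both the exponential growth rate and the polynomial prefactor of $[q^n]H(\zeta_b^{\,j};q)$, following the template of Wright's method recalled in Section~\ref{Sec: WCM}.

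The decisive point is that each nontrivial root of unity is strictly subdominant, and this rests on the positivity $\mathrm{Re}\,\mathrm{Li}_2(1-\zeta_b^{\,j})>0$ for $1\le j\le b-1$. Granting it, the saddle value of $\mathrm{Re}\log H(\zeta_b^{\,j};e^{-\delta})$ has leading coefficient strictly below $\pi^2/6$, so $[q^n]H(\zeta_b^{\,j};q)$ is exponentially smaller than $p(n)$ and comfortably inside the claimed error. I expect the main obstacle to be twofold, and both parts stem from the complex (non-positive) coefficients of $N(\zeta_b^{\,j};q)$. First, one must verify the dilogarithm positivity uniformly in $j$; this is a genuine inequality and is exactly the sort of feature that can fail in related settings, which is what underlies the non-equidistribution found in \cite{BCMO} for $t\ge2$. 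Second, because the coefficients are not nonnegative, the classical Meinardus/Hardy--Ramanujan positivity arguments do not apply, so the concentration of the circle integral near $q=1$ cannot be taken for granted: one must instead bound $\lvert H(\zeta_b^{\,j};q)\rvert$ directly on the minor arcs, controlling the phase of the twisted product as in the complex-coefficient analysis of \cite{BCM}.

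Finally I would assemble the contributions: the single term $j=0$ supplies the main term $p(n)/b$, the remaining $b-1$ terms are each $O(p(n)\,n^{-1/2})$ (indeed exponentially smaller), and summing gives $h(a,b;n)=\frac{p(n)}{b}\left(1+O(n^{-1/2})\right)$. Substituting the Hardy--Ramanujan formula $p(n)\sim\frac{1}{4n\sqrt3}e^{\pi\sqrt{2n/3}}$, whose own error term is $O(n^{-1/2})$, then produces the explicit second expression and completes the proof.
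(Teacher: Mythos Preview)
Your plan is essentially the same as the paper's: root-of-unity filter, show the $j\neq 0$ terms are exponentially subdominant via Euler--Maclaurin on $\log N(\zeta_b^{\,j};e^{-w})$, then apply Wright's Circle Method to the surviving $j=0$ term. Your dilogarithm condition $\mathrm{Re}\,\mathrm{Li}_2(1-\zeta_b^{\,j})>0$ is exactly the paper's $\mathrm{Re}\,I_{f_\xi}<0$, since $\ell I_{f_\xi}=\int_0^\infty\log(1+(\xi-1)e^{-u})\,du=-\mathrm{Li}_2(1-\xi)$; the paper verifies this negativity directly from the Mercator expansion of the integrand rather than invoking dilogarithm identities. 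One technicality to watch: when $b$ is even and $j=b/2$ (so $\zeta=-1$), the factor $1+(\zeta-1)q^{\ell n}=1-2q^{\ell n}$ passes through zero and becomes negative on the real segment, so the principal logarithm and hence your Euler--Maclaurin/Mellin step are not directly applicable; the paper treats this case separately by working with $\log|1-2e^{-\ell w}|$ instead.
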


It is worth pointing out that our results are not surprising to experts in the field, and that similar analysis for $\zeta \in \R$ was conducted by Griffin, Ono, and Tsai in \cite[Proposition 2.1]{GOT} in their investigations into the limiting distributions of hook lengths. In that paper, the authors used the saddle-point method, Taylor series expansions, and technical estimates. In using the Euler-Maclaurin summation formula in the current paper, it is possible to obtain asymptotic formulae of arbitrary precision.

\subsection*{Acknowledgements}
The author thanks Will Craig for pointing out a typo in the first version of this note.

\section{Preliminaries}

\subsection{Asymptotics of infinite $q$-products}

 We recall the generating function for partitions is
\begin{align*}
	P(q) \coloneqq \frac{1}{(q;q)_\infty},
\end{align*}
where we set $(a)_j =(a;q)_j \coloneqq  \prod_{\ell=0}^{j-1}(1-aq^\ell)$ for $j\in\N_0\cup \{\infty\}$. Its representation as (essentially) the inverse of the Dedekind $\eta$-function famously gives the classical asymptotic behaviour (see e.g.\@ \cite[equation (2.7)]{BCMO} with $k=1,h=0$ and shifting $z\mapsto \frac{z}{2\pi}$)
\begin{align} \label{equ: asymptotic P major arc}
	\left(e^{-z}; e^{-z}\right)_\infty^{-1} = \left(\frac{z}{2\pi}\right)^{\frac 12} e^{\frac{\pi}{12}\left(\frac{2\pi}{z}-\frac{z}{2\pi}\right)} \left(e^{-\frac{4\pi^2}{z}},e^{-\frac{4\pi^2}{z}} \right)_\infty^{-1},
\end{align}
for $z\in\C$ with $\operatorname{Re}(z)>0$. This will provide us the asymptotic main term on the major arc in our application of Wright's Circle Method.

For the minor arc, we recall \cite[Lemma 3.5]{BD}.
\begin{lemma} \label{Lem: asymptotic P minor arc}
	Let $M>0$ be a fixed constant. Assume that $\tau =u+iv \in \mathbb{H}$, with $Mv\leq |u| \leq \frac{1}{2}$ for $u>0$ and $v\to 0$. We have that
	\begin{align*} 
		|P(q)| \ll \sqrt{v} \exp \left[\frac 1v \left(\frac{\pi}{12} -\frac{1}{2\pi}\left(1-\frac{1}{\sqrt{1+M^2}}\right)\right)\right].
	\end{align*}
\end{lemma}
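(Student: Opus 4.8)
The plan is to compare $P(q)$ on the minor arc with its value on the positive real axis, where \eqref{equ: asymptotic P major arc} supplies precise asymptotics, and to show that moving the argument off the real axis forces an exponentially large loss. Throughout write $q=e^{2\pi i\tau}$ with $\tau=u+iv$, so that $r:=\abs{q}=e^{-2\pi v}$, and recall the Lambert-series identity
\begin{align*}
	\Log P(q)=\sum_{N\geq1}\sigma_{-1}(N)\,q^N,\qquad \sigma_{-1}(N):=\sum_{d\mid N}\tfrac1d,
\end{align*}
which follows from expanding $-\log(1-q^n)=\sum_{k\geq1}q^{nk}/k$ and collecting terms according to $N=nk$. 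Taking real parts gives $\log\abs{P(q)}=\sum_{N\geq1}\sigma_{-1}(N)r^N\cos(2\pi Nu)$, while the same expression evaluated at $u=0$ is $\log P(r)=\sum_{N\geq1}\sigma_{-1}(N)r^N$.

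First I would isolate the main term and the prefactor. Setting $z=2\pi v$ in \eqref{equ: asymptotic P major arc} and noting that the Pochhammer factor on the right tends to $1$ as $v\to0^+$ (its argument is $e^{-2\pi/v}$), one obtains
\begin{align*}
	\log P(r)=\frac{\pi}{12v}+\frac12\log v+o(1)\qquad(v\to0^+),
\end{align*}
which already accounts for the factor $\sqrt v$ and the term $\frac{\pi}{12}\cdot\frac1v$ in the claimed bound.

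The heart of the matter is to bound from below the defect $D:=\log P(r)-\log\abs{P(q)}=\sum_{N\geq1}\sigma_{-1}(N)\,r^N(1-\cos(2\pi Nu))\geq0$. Since $\sigma_{-1}(N)\geq1$ and each summand is nonnegative, I would bound $D$ from below by replacing $\sigma_{-1}(N)$ with $1$, obtaining the closed form
\begin{align*}
	D&\geq\sum_{N\geq1}r^N\bigl(1-\cos(2\pi Nu)\bigr)=\frac{r}{1-r}-\re{\frac{q}{1-q}}\\
	&=\frac{r(1+r)\bigl(1-\cos(2\pi u)\bigr)}{(1-r)\bigl(1-2r\cos(2\pi u)+r^2\bigr)},
\end{align*}
the last equality being an elementary manipulation via $1-2r\cos(2\pi u)+r^2=(1-r)^2+2r(1-\cos(2\pi u))$. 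It then remains to show that this rational expression is at least $\frac{1}{2\pi v}\bigl(1-\tfrac1{\sqrt{1+M^2}}\bigr)$ uniformly for $Mv\leq u\leq\tfrac12$ as $v\to0^+$ (the case $u<0$ is identical by evenness). Writing $u=tv$ with $t\geq M$ and using $1-\cos(2\pi u)\sim2\pi^2u^2$, $1-r\sim2\pi v$, and $(1-r)^2+2r(1-\cos(2\pi u))\sim4\pi^2v^2(1+t^2)$, the expression is asymptotic to $\frac1{2\pi v}\cdot\frac{t^2}{1+t^2}$, which is increasing in $t$ and hence at least $\frac1{2\pi v}\cdot\frac{M^2}{1+M^2}$; since $\frac{M^2}{1+M^2}\geq1-\tfrac1{\sqrt{1+M^2}}$ for all $M\geq0$ (set $s=\sqrt{1+M^2}$ and compare $1-s^{-2}$ with $1-s^{-1}$), the required lower bound follows with room to spare.

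Combining the three steps yields $\log\abs{P(q)}\leq\frac12\log v+\frac1v\bigl(\frac{\pi}{12}-\frac1{2\pi}(1-\tfrac1{\sqrt{1+M^2}})\bigr)+o(1)$, and exponentiating gives the lemma. The main obstacle is the uniformity in the last step: one must control the defect simultaneously across the whole minor arc, where the binding regime is $u\asymp v$ (so that $u$ and $v$ are genuinely comparable and the individual summands are small but numerous) rather than $u$ fixed. The strict inequality $\frac{M^2}{1+M^2}>1-\tfrac1{\sqrt{1+M^2}}$ for $M>0$ is precisely what provides the slack needed to absorb the $o(1)$ and to pass from pointwise asymptotics to a genuinely uniform bound.
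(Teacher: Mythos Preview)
The paper does not actually prove this lemma; it simply quotes it as \cite[Lemma~3.5]{BD} and moves on. Your argument is essentially the standard proof (and is in spirit what Bringmann--Dousse do): compare $\log|P(q)|$ with $\log P(|q|)$ via the Lambert expansion, feed the modularity asymptotic \eqref{equ: asymptotic P major arc} into the latter, and bound the nonnegative defect $\sum_N\sigma_{-1}(N)r^N(1-\cos 2\pi Nu)$ from below by dropping the divisor weights to a closed-form geometric sum. Your closed-form identity and the local analysis at $u=tv$ are correct, and the strict inequality $\tfrac{M^2}{1+M^2}>1-\tfrac{1}{\sqrt{1+M^2}}$ does supply exactly the slack needed to absorb lower-order terms.

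The one point that would need to be written out in a polished version is the uniformity you flag at the end. As stated, your Taylor asymptotics $1-\cos(2\pi u)\sim 2\pi^2u^2$ presuppose $u\to0$, whereas the minor arc allows $u$ up to $\tfrac12$. The clean fix is to split: for $Mv\le u\le\delta$ with $\delta$ small, your expansion is uniform and yields $D\ge\tfrac{1}{2\pi v}\cdot\tfrac{M^2}{1+M^2}(1+o(1))$; for $\delta\le u\le\tfrac12$, the quantity $1-\cos(2\pi u)$ is bounded below by a positive constant, so the defect is $\sim\tfrac{1}{2\pi v}$, which comfortably exceeds the target. With that split made explicit the argument is complete.
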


\section{Asymptotic techniques} \label{Asymptotic techniques}
Here we recall two techniques used in the proof of our main theorem, each of which is used extensively in the literature and related works.

\subsection{A variation of Euler--Maclaurin summation}
In essence, for the purposes of this paper, the Euler-Maclaurin summation formula can be seen as a way to relate sums (which are in general difficult to estimate asymptotically) to integrals, which are much more amenable to other techniques. We use a version which appeared several years ago in the literature, and to do so require some notation. We say that a function $f$ is of sufficient decay in a domain $D\subset\C$ if there exists some $\varepsilon > 0$ such that $f(w) \ll w^{-1-\varepsilon}$ as $|w| \to \infty$ in $D$. We quote Theorem 1.2 of \cite{BJM} which follows from the Euler--Maclaurin summation formula.

\begin{theorem}\label{Theorem:EulerMaclaurin1DShifted}
	Suppose that $0\le \theta < \frac{\pi}{2}$ and let
	$D_\theta := \{ re^{i\alpha} : r\ge0 \mbox{ and } |\alpha|\le \theta  \}$.
	Let $f:\C\rightarrow\C$ be holomorphic in a domain containing
	$D_\theta$, so that in particular $f$ is holomorphic at the origin, and
	assume that $f$ and all of its derivatives are of sufficient decay.
	Then for $a\in\mathbb{R}$ and $N\in\N_0$,
	\begin{equation*}
		\sum_{m\geq0}f((m+a)w) = \frac{I_{F}}{w} - \sum_{n=0}^{N-1} \frac{B_{n+1}(a) f^{(n)}(0)}{(n+1)!}w^n + O_N\left(w^N\right),
	\end{equation*}
	uniformly, as $w\rightarrow0$ in $D_\theta$. Here $I_{F}:=\int_0^\infty f(x) dx$.
\end{theorem}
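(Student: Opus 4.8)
The plan is to deduce the stated expansion from the classical Euler--Maclaurin summation formula applied to the single-variable function $\phi(t) := f(tw)$, treating $w \in D_\theta$ as a fixed complex parameter and $t \ge 0$ as the real summation variable. Note that $\phi$ and its $t$-derivatives $\phi^{(n)}(t) = f^{(n)}(tw)\,w^n$ decay as $t \to \infty$, since each $f^{(n)}$ is of sufficient decay along the ray of argument $\arg w$, so the hypotheses of the real-variable formula are met. Recall the exact Euler--Maclaurin formula with a real shift $a$: for a smooth complex-valued $\phi$ on $[0,\infty)$ whose derivatives decay at infinity, repeated integration by parts against the periodized Bernoulli functions $\widetilde B_n$ yields, for each $K \in \N$,
\begin{align*}
\sum_{m \ge 0} \phi(m+a) = \int_0^\infty \phi(t)\,dt - \sum_{n=0}^{K-1} \frac{B_{n+1}(a)}{(n+1)!}\,\phi^{(n)}(0) + \frac{(-1)^{K-1}}{K!}\int_0^\infty \widetilde B_K(a-t)\,\phi^{(K)}(t)\,dt.
\end{align*}
(One checks that the $a=0$, $K=1$ case reproduces the familiar endpoint term $\tfrac12\phi(0)$, since $B_1(0)=-\tfrac12$.) I will take this as the backbone and substitute $\phi(t) = f(tw)$, for which $\phi^{(n)}(0) = f^{(n)}(0)\,w^n$.

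First I would evaluate the main integral term. A direct change of variables gives $\int_0^\infty f(tw)\,dt = \frac1w \int_{\mathcal{R}} f(u)\,du$, where $\mathcal{R}$ is the ray $\{tw : t \ge 0\}$ of argument $\arg w \in [-\theta,\theta]$. Because $f$ is holomorphic throughout the sector $D_\theta$ and, together with its derivatives, is of sufficient decay, Cauchy's theorem lets me rotate $\mathcal{R}$ back to the positive real axis: the connecting arc at infinity contributes nothing by the decay $f(w)\ll w^{-1-\varepsilon}$, and there are no poles, so $\int_{\mathcal{R}} f = \int_0^\infty f(u)\,du = I_F$. This yields the leading term $I_F/w$, and is the one place where the sector condition $\theta < \tfrac{\pi}{2}$ and the holomorphy of $f$ are genuinely needed. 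The Bernoulli terms then read $-\sum_{n=0}^{K-1} \frac{B_{n+1}(a)\, f^{(n)}(0)}{(n+1)!}\,w^n$, matching the asserted sum after I fix the truncation $K$ below.

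To pin down the error I would carry the expansion one step beyond what is asserted, taking $K = N+1$, so that the listed Bernoulli terms run over $0 \le n \le N$. The remainder is $\frac{(-1)^N}{(N+1)!}\int_0^\infty \widetilde B_{N+1}(a-t)\, f^{(N+1)}(tw)\,w^{N+1}\,dt$; bounding $\widetilde B_{N+1}$ by its sup-norm and using the change of variables $s = t\abs{w}$ shows $\int_0^\infty \abs{f^{(N+1)}(tw)}\,dt = \abs{w}^{-1}\int_0^\infty \abs{f^{(N+1)}(s\,e^{i\arg w})}\,ds = O(\abs{w}^{-1})$, uniformly, since $f^{(N+1)}$ is integrable along every ray in $D_\theta$ by sufficient decay. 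Hence the remainder is $O_N(w^N)$. Finally, the single extra term $n = N$, namely $-\frac{B_{N+1}(a)\, f^{(N)}(0)}{(N+1)!}\,w^N$, is itself $O_N(w^N)$ and is absorbed into the error, leaving precisely the sum over $0 \le n \le N-1$ together with $O_N(w^N)$, as claimed.

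The main obstacle, and the part requiring the most care, is the uniformity in $w$ across the entire sector $D_\theta$: every estimate — the vanishing of the arc at infinity, the convergence of the rotated integrals, and the remainder bound — must be controlled by constants depending only on $N$, $\theta$, and the implied decay constants of $f$ and its derivatives, and not on $\arg w$. This is exactly where one exploits that the decay hypothesis holds uniformly throughout $D_\theta$ and that $\theta$ is bounded away from $\tfrac{\pi}{2}$, so the rotation never approaches the critical direction in which the arc estimate would fail. As a cross-check that makes the uniformity especially transparent, one may instead write the sum as a Mellin--Barnes integral $\frac{1}{2\pi i}\int_{(c)} \widetilde f(s)\,\zeta(s,a)\, w^{-s}\,ds$ against the Hurwitz zeta function and shift the contour leftward: the pole of $\zeta(s,a)$ at $s=1$ produces $I_F/w$, the poles of the Mellin transform $\widetilde f$ at $s=-n$ produce the Bernoulli terms via $\zeta(-n,a) = -B_{n+1}(a)/(n+1)$, and the remaining integral gives the error, with $\abs{w^{-s}}$ controlled by $\abs{\arg w} \le \theta$. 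I would use the direct Euler--Maclaurin argument as the primary proof and retain the Mellin computation only as a consistency check.
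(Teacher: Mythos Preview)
The paper does not give its own proof of this statement: it is quoted verbatim as Theorem~1.2 of \cite{BJM}, with the remark that it ``follows from the Euler--Maclaurin summation formula.'' Your proposal is a correct and complete sketch of exactly that derivation---apply real-variable Euler--Maclaurin to $\phi(t)=f(tw)$, rotate the contour to identify the leading integral as $I_F/w$, and bound the Bernoulli-kernel remainder uniformly over the sector---and this is precisely the strategy used in the cited source, so there is nothing substantive to compare. One small caution: the exact identity you invoke, with the integral starting at $0$ and boundary terms $B_{n+1}(a)\phi^{(n)}(0)$, is most cleanly stated for $0<a\le 1$; for general $a\in\R$ one should either reduce to that range by shifting the summation index (and then Taylor-expand the finitely many displaced terms, which reproduces the polynomial $B_{n+1}(a)$ via the addition formula for Bernoulli polynomials), or simply rely on your Mellin--Barnes cross-check, which handles all $a>0$ uniformly through $\zeta(-n,a)=-B_{n+1}(a)/(n+1)$.
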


\subsection{Wright's Circle Method}\label{Sec: WCM}

Wright's Circle Method is a more flexible variant of the original method of Hardy and Ramanujan, and has seen many applications in recent years. Here we recall a variant proved in \cite{BCMO} which is particularly easy-to-use and applicable to a wide range of generating functions.

\begin{prop} \label{WrightCircleMethod}
	Suppose that $F(q)$ is analytic for $q = e^{-z}$ where $z=x+iy \in \C$ satisfies $x > 0$ and $|y| < \pi$, and suppose that $F(q)$ has an expansion $F(q) = \sum_{n=0}^\infty c(n) q^n$ near 1. Let $c,N,M>0$ be fixed constants. Consider the following hypotheses:
	
	\begin{enumerate}[leftmargin=*]
		\item[\rm(1)] As $z\to 0$ in the bounded cone $|y|\le Mx$ (major arc), we have
		\begin{align*}
			F(e^{-z}) = z^{B} e^{\frac{A}{z}} \left( \sum_{j=0}^{N-1} \alpha_j z^j + O \left(|z|^N\right) \right),
		\end{align*}
		where $\alpha_s \in \C$, $A\in \R^+$, and $B \in \R$. 
		
		\item[\rm(2)] As $z\to0$ in the bounded cone $Mx\le|y| < \pi$ (minor arc), we have 
		\begin{align*}
			\lvert	F(e^{-z}) \rvert \ll e^{\frac{1}{\mathrm{Re}(z)}(A - \kappa)}.
		\end{align*}
		for some $\kappa\in \R^+$.
	\end{enumerate}
	If  {\rm(1)} and {\rm(2)} hold, then as $n \to \infty$ we have for any $N\in \R^+$ 
	\begin{align*}
		c(n) = n^{\frac{1}{4}(- 2B -3)}e^{2\sqrt{An}} \lp \sum\limits_{r=0}^{N-1} p_r n^{-\frac{r}{2}} + O\left(n^{-\frac N2}\right) \rp,
	\end{align*}
	where $p_r := \sum\limits_{j=0}^r \alpha_j c_{j,r-j}$ and $c_{j,r} := \dfrac{(-\frac{1}{4\sqrt{A}})^r \sqrt{A}^{j + B + \frac 12}}{2\sqrt{\pi}} \dfrac{\Gamma(j + B + \frac 32 + r)}{r! \Gamma(j + B + \frac 32 - r)}$. 
\end{prop}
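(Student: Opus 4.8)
The plan is to recover the coefficients $c(n)$ by Cauchy's theorem and then execute a saddle-point analysis driven by the two hypotheses. Writing $q = e^{-z}$ with $z = x + iy$ and integrating over the circle $|q| = e^{-x}$, one has
\[
	c(n) = \frac{1}{2\pi i}\int_{x - i\pi}^{x+i\pi} F\lp e^{-z}\rp e^{nz}\,dz = \frac{e^{nx}}{2\pi}\int_{-\pi}^\pi F\lp e^{-x-iy}\rp e^{iny}\,dy.
\]
The phase $nz + \frac{A}{z}$ appearing after inserting hypothesis (1) has a saddle point at $z = \sqrt{A/n}$, so I would fix the radius $x := \sqrt{A/n}$, which tends to $0$ as $n \to \infty$ as required by the hypotheses, and at which the phase takes the value $2\sqrt{An}$. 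I then split the contour into the major arc $|y| \le Mx$ and the minor arc $Mx \le |y| < \pi$, exactly the two regimes on which (1) and (2) are assumed.

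On the minor arc I would simply insert the bound from hypothesis (2), namely $|F(e^{-z})| \ll e^{(A - \kappa)/x}$, together with $|e^{nz}| = e^{nx}$, to estimate the contribution by
\[
	\ll e^{nx} \cdot e^{(A-\kappa)/x} = e^{2\sqrt{An} - \kappa \sqrt{n/A}}.
\]
Since $\kappa > 0$, this is smaller than the expected main term $e^{2\sqrt{An}}$ by a factor $e^{-c\sqrt n}$, and so it is absorbed into the error term $O(n^{-N/2})$ for every $N$.

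On the major arc I would insert the full expansion from hypothesis (1), reducing the problem to evaluating, for each $s = B + j$, the integrals
\[
	\frac{1}{2\pi i}\int_{x - iMx}^{x + iMx} z^{s} e^{nz + A/z}\,dz,
\]
plus controlling the contribution of the $O(|z|^N)$ remainder. For each such integral I would first extend the truncated vertical segment to the whole line from $x - i\infty$ to $x + i\infty$; the saddle lies well inside the major arc (its Gaussian width is of order $n^{-3/4}$, which is $o(x)$), so the added tails are exponentially negligible. The substitution $z = \sqrt{A/n}\, t$ turns the exponent into $\sqrt{An}\lp t + t^{-1}\rp$, whence each integral becomes a classical Hankel-type representation of a modified Bessel function $I_\nu(2\sqrt{An})$ with $\nu$ determined by $s$. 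Invoking the known asymptotic expansion of $I_\nu$ at large argument produces precisely the powers $n^{-r/2}$ with coefficients $c_{j,r}$ involving the ratio $\Gamma(j + B + \tfrac32 + r)/\Gamma(j + B + \tfrac32 - r)$; collecting terms by total order then gives $p_r = \sum_{j=0}^r \alpha_j c_{j, r-j}$ and the stated prefactor $n^{\frac14(-2B-3)} e^{2\sqrt{An}}$.

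The main obstacle will be the uniform control of the error terms rather than the leading saddle-point computation. Concretely, I must show that (a) the $O(|z|^N)$ remainder in hypothesis (1), after the scaling $z = \sqrt{A/n}\,t$, genuinely contributes at order $n^{-N/2}$ relative to the main term and does not conspire with the width of the major arc to produce a larger error, and (b) extending each truncated integral to the full vertical line costs only an exponentially small amount, which requires quantitative saddle-point bounds on $e^{nz + A/z}$ along the contour for $|y|$ between $Mx$ and $\infty$. Matching the Bessel asymptotics to the explicit Gamma-quotient form of $c_{j,r}$ is then a bookkeeping exercise once the integral has been identified.
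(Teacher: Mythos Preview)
The paper does not actually prove this proposition: it is quoted as a black-box tool, with the proof deferred to the reference \cite{BCMO} (``Here we recall a variant proved in \cite{BCMO}''). So there is no in-paper argument to compare against.

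That said, your sketch is the standard proof of Wright's circle method and is essentially what is carried out in \cite{BCMO}: Cauchy's formula on the circle $|q|=e^{-x}$ with $x=\sqrt{A/n}$, the major/minor arc split at $|y|=Mx$, the minor arc killed by hypothesis~(2), and on the major arc the integrals $\frac{1}{2\pi i}\int z^{s} e^{nz+A/z}\,dz$ identified with $I$-Bessel functions whose large-argument expansion yields exactly the Gamma-quotient coefficients $c_{j,r}$. Your flagged obstacles (a) and (b) are the right places where care is needed, and both are handled in \cite{BCMO} by the scaling $z=\sqrt{A/n}\,t$ together with the observation that on the major arc $|z|\asymp x$, so the $O(|z|^N)$ remainder contributes $O(x^N)=O(n^{-N/2})$ after integration over an interval of length $O(x)$.
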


\section{Proof of Theorem \ref{Thm: main}}

\begin{proof}[Proof of Theorem \ref{Thm: main}]
	
Let $h(a,b;n)$ count the number of hook lengths of length $\ell \equiv a \pmod{b}$. Then we have that
\begin{align}\label{eqn: splitting}
	\sum_{n \geq 0} h(a,b;n) q^n = \frac{1}{b} \sum_{k=0}^{b-1} \zeta_b^{-ak} H(\zeta_b^k;q).
\end{align}
We aim to show that the $k=0$ term on the right-hand side dominates the asymptotic behaviour as $w \to 0$ in $D_\theta$. Since this term is independent of $b$ we then immediately obtain asymptotic equidistribution.

First note that the $k=0$ term gives a contribution of
\begin{align*}
	H(1;q) = P(q) = (q;q)_\infty^{-1}.
\end{align*}
This is the generating function for partitions, and is well-studied in the literature. In particular, we have by \eqref{equ: asymptotic P major arc} and Lemma \ref{Lem: asymptotic P minor arc} that $P(q)$ satisfies the conditions of Proposition \ref{WrightCircleMethod}.

The idea now is simple. We just need to show that the numerator of each $k\neq0$ term in \eqref{eqn: splitting} gives an exponentially small contribution. Equivalently, we show that the main term in the asymptotic of logarithm of the numerator has a negative real part.

First, let $\xi$ be a root of unity not equal to $\pm 1$. Then we first want to determine the asymptotic behaviour of
\begin{align*}
\mathcal{F}_\xi(q) \coloneqq	\prod_{n \geq 1} \left(1+(\xi-1)q^{\ell n}\right)^\ell = \prod_{n \geq 1} P_\xi(q^n)^\ell
\end{align*} 
with $P_\xi(x) \coloneqq 1+ (\xi-1)x^\ell$.

We note that $P(x) \not\in (-\infty,0]$ for $x \in [0,1]$, and so we avoid the branch cut of the complex logarithm in the following. We take logarithms (using the principal branch throughout) to obtain
\begin{align*}
	G_\xi(q) \coloneqq \ell \sum_{n \geq 1} \Log\left( P_\xi(q^n) \right). 
\end{align*}
We now estimate the sum for $q = e^{-w}$ as $w \to 0$ in $D_\theta$ using Euler-Maclaurin summation. Write 
\begin{align*}
	G_\xi(e^{-w}) = \ell \sum_{n \geq 1} f_\xi(nw),
\end{align*}
where 
\begin{align*}
	f_\xi(z) \coloneqq \Log(P(e^{-z})).
\end{align*}
Applying Theorem \ref{Theorem:EulerMaclaurin1DShifted} then gives that
\begin{align*}
	G_\xi(e^{-w}) &= \frac{\ell I_{f_\xi}}{w} -  \ell \sum_{n=0}^{N-1} \frac{B_{n+1}(1) f_\xi^{(n)}(0)}{(n+1)!}w^n + O_N\left(|w|^N\right)\\
	&= \frac{\ell I_{f_\xi}}{w} - \frac{\ell  \Log(\xi)}{2} + O(|w|).
\end{align*}
Thus we see that as $w \to 0$ in $D_\theta$ we have
\begin{align*}
	\mathcal{F}_\xi (e^{-w}) \sim \xi^{\frac{\ell}{2}} \exp\left( \ell I_{f_\xi}\right).
\end{align*}
Now we need only check that 
\begin{align*}
\Re\left(	I_{f_\xi} \right) < 0.
\end{align*}
This is obvious, since
\begin{align*}
	\Re\left( I_{f_\xi}  \right) = 	\Re\left(  \int_0^\infty \Log(1+(\xi-1)e^{-\ell x})  dx  \right) = \int_0^\infty 	\Re\left(  \Log(1+(\xi-1)e^{-\ell x})   \right) dx <0,
\end{align*}
which can be seen via the Mercator series representation of the complex logarithm $\Log(1+u) = \sum_{n \geq 1} \frac{(-1)^{n+1}u^n}{n}$ for $|u|<1$.
Therefore on both the major and minor arcs (since $w \to 0$ in $D_\theta$), the contribution of $\mathcal{F}_\xi$ is dominated by the $k=0$ term in \eqref{eqn: splitting}.

Now, when $\xi =-1$ we have the contribution
\begin{align*}
\mathcal{G}(w) \coloneqq	\prod_{n \geq 1} \left(1-2e^{-\ell w}\right)^\ell.
\end{align*}
Taking absolute values and then the logarithm gives
\begin{align*}
	\Log(|\mathcal{G}(w)|) = \ell \sum_{n \geq 1} \Log\left( \lvert 1-2e^{-\ell w} \rvert \right).
\end{align*}
Then again applying Theorem \ref{Theorem:EulerMaclaurin1DShifted} the main term asymptotic contribution is
\begin{align*}
\Log(|\mathcal{G}(w)|) \sim	\frac{\ell I_{g}}{w}
\end{align*}
with 
\begin{align*}
	g(x) \coloneqq \log\left( \lvert 1-2x^{-\ell} \rvert \right).
\end{align*}
Now, we have
\begin{align*}
	I_g = \int_0^\infty  \log\left( \lvert 1-2e^{-\ell x} \rvert \right) dx <0
\end{align*}
as desired. After exponentiating, we similarly obtain that the asymptotic contribution of the $\zeta = -1$ term is exponentially dominated by the $k=0$ term in \eqref{eqn: splitting}.

Then since the $k=0$ term (i.e.\@ $P(q)$) gives an exponentially dominant asymptotic contribution to \eqref{eqn: splitting} as $w \to 0$ on both the major and minor arcs. We now use Proposition \ref{WrightCircleMethod} with the parameters $A=\frac{\pi^2}{6}$, $B= \frac{1}{2}$ and $\alpha_0 = \frac{1}{\sqrt{2\pi}}$  to obtain that as $n \to \infty$ we have
\begin{align*}
	h(a,b;n) = \frac{1}{4bn\sqrt{3}} e^{\pi \sqrt{\frac{2n}{3}}} \left(1+O\left(n^{-\frac{1}{2}}\right)\right) = \frac{p(n)}{b} \left(1+O\left(n^{-\frac{1}{2}}\right)\right)
\end{align*}
as desired.
\end{proof}

\section{Further results}

Here we discuss other results for $h(a,b;n)$ which are simple to conclude given Theorem \ref{Thm: main}. Firstly, we immediately obtain that $h(a,b;n)$ automatically satisfy all of the higher-order Tur\'{a}n inequalities via \cite[Theorem 3]{GORZ}  and the higher order Laguerre inequalities via \cite[Theorem 1.4]{Wagner}. Each essentially arise since the main term asymptotic is controlled by a modular form (up to a power of $q$), and the results follow easily.

\begin{cor}
	For large enough $n$, the coefficients $h(a,b;n)$ satisfy all higher-order Tur\'{a}n inequalities and all higher order Laguerre inequalities.
\end{cor}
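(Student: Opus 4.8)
The plan is to appeal to the known asymptotic expansion of $h(a,b;n)$ provided by Theorem \ref{Thm: main} and then invoke the two cited results about coefficients whose main-term asymptotics are governed by a modular form. More precisely, Theorem \ref{Thm: main} tells us that $h(a,b;n) = \tfrac{1}{b} p(n)(1 + O(n^{-1/2}))$, and the leading asymptotic is (up to the constant factor $1/b$ and a shift by a power of $q$) exactly that of the partition function, which is the $q$-expansion of a weight $-1/2$ modular form. One would first record that the generating function $\sum_n h(a,b;n) q^n$ has the same dominant singularity structure at $q=1$ as $P(q)$, with the same constants $A = \pi^2/6$, $B = 1/2$; this is already established inside the proof of Theorem \ref{Thm: main}.

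Next I would quote \cite[Theorem 3]{GORZ} in the precise form in which it applies: that result gives a general criterion, in terms of a sequence possessing an asymptotic expansion of the Hardy--Ramanujan--Rademacher shape $c(n) \sim C n^{\beta} e^{\gamma \sqrt{n}}(1 + \sum_{r\geq 1} b_r n^{-r/2})$, under which all higher-order Tur\'an inequalities hold for $n$ sufficiently large. Since Proposition \ref{WrightCircleMethod} (applied as in the proof of Theorem \ref{Thm: main}) furnishes exactly such an expansion to arbitrary order for $h(a,b;n)$, with positive leading constant $p_0 = \alpha_0 c_{0,0} > 0$, the hypotheses of \cite[Theorem 3]{GORZ} are met, and all higher-order Tur\'an inequalities follow for large $n$. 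The constant factor $1/b$ and the congruence parameter $a$ do not affect the argument, since the relevant criterion depends only on the shape of the asymptotic expansion and the positivity of its leading term.

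For the higher-order Laguerre inequalities I would argue in the same way via \cite[Theorem 1.4]{Wagner}: that theorem states that a sequence with a sufficiently precise asymptotic expansion of the same exponential-times-power type, arising from a modular (or Jacobi-theta-like) generating function, eventually satisfies all higher-order Laguerre inequalities. Again the asymptotic expansion produced by Wright's Circle Method for $h(a,b;n)$ satisfies the required hypotheses. I would note explicitly that although Theorem \ref{Thm: main} is stated only with an $O(n^{-1/2})$ error, the method (Euler--Maclaurin summation to all orders together with Proposition \ref{WrightCircleMethod}, valid for any $N$) yields a full asymptotic expansion, which is what \cite{GORZ} and \cite{Wagner} actually require.

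The main obstacle — really the only point needing care — is verifying that the cited theorems of \cite{GORZ} and \cite{Wagner} are stated in a form general enough to cover $h(a,b;n)$ rather than the partition function itself; this amounts to checking that those results are phrased as statements about sequences with a prescribed asymptotic expansion (as opposed to statements tied to a specific modular form), and that the presence of the extra multiplicative constant $1/b$ is harmless. Given the remark already made in the text — that the results follow since the main-term asymptotic is controlled by a modular form up to a power of $q$ — I would keep the proof short: cite the two theorems, note that Theorem \ref{Thm: main} (strengthened to a full asymptotic expansion via the method of its proof) supplies the needed input, and conclude.
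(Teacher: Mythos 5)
Your proposal matches the paper's argument: the corollary is deduced exactly by citing \cite[Theorem 3]{GORZ} for the higher-order Tur\'an inequalities and \cite[Theorem 1.4]{Wagner} for the Laguerre inequalities, using the fact that the main-term asymptotic of $h(a,b;n)$ is governed by the same modular-form-type expansion as $p(n)$ (with the full expansion to arbitrary order available from Proposition \ref{WrightCircleMethod} as in the proof of Theorem \ref{Thm: main}). Your added care about verifying that the cited results apply to sequences with the prescribed asymptotic shape, and that the factor $\frac{1}{b}$ is harmless, is exactly the content the paper leaves implicit.
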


We also immediately obtain the following corollary via  \cite[Corollaries 3.2 and 3.3]{CCM} .

\begin{cor}
	For large enough $n_1$ and $n_2$, we have that
	\begin{align*}
	h(a,b;n_1)h(a,b;n_2) >h(a,b;n_1+n_2).
	\end{align*}
\end{cor}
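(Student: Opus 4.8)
The plan is to derive the submultiplicativity of $h(a,b;n)$ directly from the asymptotic formula in Theorem~\ref{Thm: main} by invoking the general machinery of \cite[Corollaries 3.2 and 3.3]{CCM}. The point of those corollaries is that whenever a sequence $a(n)$ of positive reals admits an asymptotic expansion of the shape $a(n) \sim C n^{\beta} e^{\gamma \sqrt{n}}$ (with $\gamma > 0$), the exponential growth forces the strict inequality $a(n_1)a(n_2) > a(n_1+n_2)$ for all sufficiently large $n_1, n_2$, since $\sqrt{n_1} + \sqrt{n_2} > \sqrt{n_1+n_2}$ strictly, and this gap in the exponent dominates the polynomial and constant factors.

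First I would record that Theorem~\ref{Thm: main} gives precisely such an expansion: writing $h(a,b;n) = \frac{1}{4bn\sqrt{3}} e^{\pi\sqrt{2n/3}}(1 + O(n^{-1/2}))$, we may take $C = \frac{1}{4b\sqrt{3}}$, $\beta = -1$, and $\gamma = \pi\sqrt{2/3}$. In particular $h(a,b;n) > 0$ for all large $n$, so that the hypotheses of \cite[Corollaries 3.2 and 3.3]{CCM} are met for the sequence $\big(h(a,b;n)\big)_{n}$ (with $n$ restricted to a suitable range where the positivity and the error bound hold). Then I would simply quote the cited corollaries to conclude that there is an $n_0$ such that for all $n_1, n_2 \ge n_0$ we have $h(a,b;n_1)\,h(a,b;n_2) > h(a,b;n_1+n_2)$, which is the assertion of the corollary.

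The only genuinely nontrivial point is to check that the normalization and the parameters in Theorem~\ref{Thm: main} match the format demanded in \cite{CCM}; since \cite{CCM} is written exactly to accommodate coefficients of this partition-theoretic type (including $p(n)$ itself, which satisfies the same inequality by Bessenrodt--Ono), this verification is immediate. I do not expect any obstacle beyond this bookkeeping: the sign of $\gamma$ is positive, the leading constant is positive, and the error term $O(n^{-1/2})$ is more than strong enough to be absorbed, so the superadditivity of $\sqrt{n}$ on the positive reals does all the work.
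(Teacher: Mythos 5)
Your proposal is correct and matches the paper exactly: the paper also obtains this corollary by observing that Theorem \ref{Thm: main} puts $h(a,b;n)$ in the form required by \cite[Corollaries 3.2 and 3.3]{CCM} and then quoting those results. Your added explanation of why the cited machinery works (strict superadditivity of $\sqrt{n}$ dominating the polynomial factors) is a sound gloss on the same argument rather than a different route.
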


Numerical experiments also suggest that for each fixed $n$ the roots of the polynomials $p$ defined by $H_\ell(\zeta,q)= \sum_{n \geq 0} p_\ell(\zeta) q^n$ are all negative for any $\ell$, and so it may give an interesting new example of the slightly stronger class of Laguerre-Polya functions (called functions of Type I) of Wagner \cite{Wagner}. It also appears that these polynomials are unimodal. It would be very interesting to understand these properties in further detail, and we leave this as a question to the interested reader.


\begin{thebibliography}{99}

	\bibitem{BCM} K. Bringmann, W. Craig, and J. Males, \textit{Asymptotics for $d$-fold partition diamonds and related infinite products}, preprint.

\bibitem{BCMO} K. Bringmann, W. Craig, J. Males, and K. Ono, \textit{Distributions on partitions arising from Hilbert schemes and hook lengths}, Forum Math. Sigma, 10, E49. 

\bibitem{BD} K. Bringmann and J. Dousse, \textit{On Dyson's crank conjecture and the uniform asymptotic behavior of certain inverse theta functions}, Trans. Amer. Math. Soc. 368 (2016), no. 5, 3141--3155.

	\bibitem{BJM} K. Bringmann, C. Jennings-Shaffer, and K. Mahlburg, {\it On a Tauberian theorem of Ingham and Euler--Maclaurin summation}, Ramanujan J. {\bf 1} (2023), 55--86.

\bibitem{CCM} G. Cesana, W. Craig, and J. Males, \textit{Asymptotic equidistribution for partition statistics and topological invariants}, J. Number Theory, {\bf 256} (2024), Pages 373--396.

\bibitem{GORZ} M. Griffin, K. Ono, L. Rolen, and D. Zagier, \textit{Jensen polynomials for the {R}iemann zeta function and other
	sequences}, Proc. Natl. Acad. Sci. USA, {\bf 116} (2019), 23, 11103--11110.

\bibitem{GOT} M. Griffin, K. Ono, and W.L. Tsai, \textit{Distributions of hook lengths in integer partitions}, preprint.

\bibitem{Han} G.-N. Han, \emph{The Nekrasov-Okounkov hook length formula: refinement, elementary proof, extension and applications}, Ann. Inst. Fourier (Grenoble) \textbf{60} (2010), 1--29.

\bibitem{HardyRamanujan} G. Hardy and S. Ramanujan, \emph{Asymptotic formulae in combinatory analysis},
Proc. London Math. Soc. Ser. 2 \textbf{17} (1918), 75--115.

\bibitem{Wagner} I. Wagner, \textit{On a new class of Laguerre-Pólya type functions with applications in number theory}, Pacific J. Math., {\bf 320} (2022), no.1, 177--192.
\bibitem{Wright1} E. Wright, \textit{Stacks}, Quart. J. Math. Oxford Ser. {\bf 19} (1968), no.\@ 2, 313--320. 

\bibitem{Wright2} E. Wright, \textit{Stacks. II}, Quart. J. Math. Oxford Ser. {\bf 22} (1971), no.\@ 2, 107--116.

\end{thebibliography}
\end{document}